\newtheorem{theorem}{Theorem}[section]
\newtheorem{corollary}[theorem]{Corollary}
\newtheorem{lemma}[theorem]{Lemma}
\theoremstyle{definition}
\theoremstyle{remark} \theoremstyle{example}
\newtheorem{example}[theorem]{Example}
\numberwithin{equation}{section}
\begin{document}
\title[Starlikeness and convexity of integral operators$\cdots $]{%
Starlikeness and convexity of integral operators involving Mittag-Leffler
functions}
\author{B.A. Frasin}
\address{Department of Mathematics, Faculty of Science, Al al-Bayt
University, Mafraq, Jordan.}
\email{bafrasin@yahoo.com.}

\begin{abstract}
In this paper, we shall find the order of starlikeness and convexity for
integral operators%
\begin{equation*}
\mathbb{F}_{\alpha _{j},\beta _{j},\lambda _{j},\zeta }(z)=\left\{ \zeta
\int\limits_{0}^{z}t^{\zeta -1}\prod_{j=1}^{n}\left( \frac{\mathbb{E}%
_{\alpha _{j},\beta _{j}}(t)}{t}\right) ^{1/\lambda _{j}}dt\right\}
^{1/\zeta },
\end{equation*}%
where the functions $\mathbb{E}_{\alpha _{j},\beta _{j}}$ are the normalized
Mittag-Leffler functions.

\textbf{Keywords: }Analytic functions; Starlike and convex functions;
Integral operators; Mittag-Leffler function.

\textbf{2010 Mathematics Subject Classification}: 33E12, 30C45.
\end{abstract}

\maketitle

\section{\textbf{Introduction and preliminaries}}

Let $\mathcal{A}$ denote the class of functions of the form :

\begin{equation}
f(z)=z+\sum\limits_{n=2}^{\infty }a_{n}z^{n},
\end{equation}%
which are analytic in the open unit disk $\mathbb{U}=\{z:\left\vert
z\right\vert <1\}$. A function $f(z)$ $\in \mathcal{A}$ is said to be
starlike of order $\delta $ if it satisfies 
\begin{equation}
\mbox{Re}\left( \frac{zf^{\prime }(z)}{f(z)}\right) >\delta \qquad (z\in 
\mathbb{U})
\end{equation}%
for some $\delta (0\leq \delta <1).$ We denote by $\mathcal{S}^{\ast
}(\delta )$ the subclass of $\mathcal{A}$ consisting of functions which are
starlike of order $\delta $ in $\mathbb{U}.$ Clearly $\mathcal{S}^{\ast
}(\delta )\subseteq \mathcal{S}^{\ast }(0)=\mathcal{S}^{\ast },$ where $%
\mathcal{S}^{\ast }$is the class of functions that are starlike in $\mathbb{U%
}$ $.$ Also, a function $f(z)$ $\in \mathcal{A}$ is said to be convex of
order $\delta $ if it satisfies

\begin{equation}
\mbox{Re}\left( 1+\frac{zf^{\prime \prime }(z)}{f^{\prime }(z)}\right)
>\delta \qquad (z\in \mathbb{U})
\end{equation}%
for some $\delta (0\leq \delta <1).$ We denote by $\mathcal{C(\delta )}$ the
subclass of $\mathcal{A}$ consisting of functions which are convex of order $%
\alpha $ in $\mathbb{U}$. Clearly $\mathcal{C(\delta )\subseteq C}(0)=%
\mathcal{C},$ the class of functions that are convex in $\mathbb{U}$.

\bigskip Let $E_{\alpha }(z)$ be the function defined by%
\begin{equation*}
E_{\alpha }(z)=\sum_{n=0}^{\infty }\frac{z^{n}}{\Gamma (\alpha n+1)},\quad
(z\in \mathbb{C},\mathfrak{R(\alpha )}>0\;).
\end{equation*}

The function $E_{\alpha }(z)$ was introduced by Mittag-Leffler \cite{mit}
and is, therefore, known as the Mittag-Leffler function. \bigskip A more
general function $E_{\alpha ,\beta }$ generalizing $E_{\alpha }(z)$ was
introduced by Wiman \cite{wi1,wi2} and defined by

\begin{equation}
E_{\alpha ,\beta }(z)=\sum_{n=0}^{\infty }\frac{z^{n}}{\Gamma (\alpha
n+\beta )},\quad (z,\alpha ,\beta \in \mathbb{C},\mathfrak{R(\alpha )}>0\;).
\end{equation}

The Mittag-Leffler function arises naturally in the solution of fractional
order differential and integral equations, and especially in the
investigations of fractional generalization of kinetic equation, random
walks, L\'{e}vy flights, super-diffusive transport and in the study of
complex systems. Several properties of Mittag-Leffler function and
generalized Mittag-Leffler function can be found e.g. in (\cite{ade, gar}, 
\cite{kir1}-\cite{sri}).

\bigskip Observe that Mittag-Leffler function $E_{\alpha ,\beta }$ does not
belong to the family $\mathcal{A}$. Therefore, we consider the following
normalization of the Mittag-Leffler function:

\begin{eqnarray}
\mathbb{E}_{\alpha ,\beta }(z) &=&\Gamma (\beta )zE_{\alpha ,\beta }(z) 
\notag \\
&=&z+\sum_{n=2}^{\infty }\frac{\Gamma (\beta )}{\Gamma (\alpha (n-1)+\beta )}%
z^{n},  \label{44}
\end{eqnarray}

where $z,\alpha ,\beta \in \mathbb{C};\beta \neq 0,-1,-2,\cdots $ and $%
\mathfrak{R(\alpha )}>0.$

\bigskip Whilst formula \eqref{44} holds for complex-valued $\alpha ,\beta $
and $z\in \mathbb{C}$, however in this paper, we shall restrict our
attention to the case of real-valued $\alpha ,\beta $ and $z\in $ $\mathbb{U}
$. Observe that the function $\mathbb{E}_{\alpha ,\beta }$ contains many
well-known functions as its special case, for example, $\mathbb{E}%
_{2,1}(z)=z\cosh \sqrt{z}$, $\mathbb{E}_{2,2}(z)=\sqrt{z}\sinh \sqrt{z}$, $%
\mathbb{E}_{2,3}(z)=2[\cosh \sqrt{z}-1]\ $and $\mathbb{E}_{2,4}(z)=6[\sinh 
\sqrt{z}-\sqrt{z}]/\sqrt{z}.$

Geometric properties including starlikeness, convexity and
close-to-convexity for the Mittag-Leffler function $E_{\alpha ,\beta }$ were
recently investigated by Bansal and Prajapat in \cite{ban}.

Very recently, Srivastava \textit{et al.}\cite{sribv} introduced a new
integral operator $\mathbb{F}_{\alpha _{j},\beta _{j},\lambda _{j},\zeta }$
involving Mittag-Leffler functions given by 
\begin{equation}
\mathbb{F(}z)=\mathbb{F}_{\alpha _{j},\beta _{j},\lambda _{j},\zeta
}(z)=\left\{ \zeta \int\limits_{0}^{z}t^{\zeta -1}\prod_{j=1}^{n}\left( 
\frac{\mathbb{E}_{\alpha _{j},\beta _{j}}(t)}{t}\right) ^{1/\lambda
_{j}}dt\right\} ^{1/\zeta },  \label{z1}
\end{equation}

\bigskip where the functions $\mathbb{E}_{\alpha _{j},\beta _{j}}$ are the
normalized Mittag-Leffler functions defined by%
\begin{equation*}
\mathbb{E}_{\alpha _{j},\beta _{j}}(z)=\Gamma (\beta _{j})zE_{\alpha
_{j},\beta _{j}}(z).
\end{equation*}%
and the parameters $\lambda _{1},\lambda _{1},\dots ,\lambda _{n}$ and $%
\zeta $ are are positive real numbers such that the integrals in \eqref{z1}
exist. Here and throughout in the sequel every many-valued function is taken
with the principal branch.

In the present paper, we will find the order of starlikeness and convexity
for the above integral defined by \eqref{z1}.

\bigskip\ \ In order to prove our main results, we recall the following
lemmas.

\begin{lemma}
\label{lem1} ( \cite{milmoc}). Let $\Phi (u,v)$\ be a complex valued
function,%
\begin{equation*}
\Phi :\mathbb{D}\rightarrow \mathbb{C},\mathbb{\qquad (D\subset {C}}^{2})
\end{equation*}%
and let $u=u_{1}+iu_{2}$\ and $v=v_{1}+iv_{2}.$\ Suppose that the function $%
\Phi (u,v)$\ satisfies
\end{lemma}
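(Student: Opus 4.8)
The plan is to argue by contradiction, using the classical substitution that turns a positive-real-part condition into a bounded-modulus condition, after which Jack's lemma applies. I take the (standard Miller--Mocanu) hypotheses to be: (i) $\Phi$ is continuous on $\mathbb{D}$; (ii) $(1,0)\in\mathbb{D}$ and $\mbox{Re}\,\Phi(1,0)>0$; (iii) $\mbox{Re}\,\Phi(iu_2,v_1)\le 0$ whenever $(iu_2,v_1)\in\mathbb{D}$ with $v_1\le-\tfrac12(1+u_2^2)$; with conclusion: if $p(z)=1+p_1z+p_2z^2+\cdots$ is analytic in $\mathbb{U}$ with $(p(z),zp'(z))\in\mathbb{D}$ and $\mbox{Re}\,\Phi(p(z),zp'(z))>0$ there, then $\mbox{Re}\,p(z)>0$ in $\mathbb{U}$. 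I would introduce $w(z)=\dfrac{1-p(z)}{1+p(z)}$, analytic near the origin with $w(0)=0$, and note that $\mbox{Re}\,p>0$ is equivalent to $|w|<1$. So it suffices to rule out $|w|$ reaching $1$.

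First I would locate the first boundary-touching point. Suppose $\mbox{Re}\,p(z)>0$ fails somewhere. Since $\mbox{Re}\,p(0)=1>0$ and $p$ is continuous, a compactness argument gives a smallest radius $r_0\in(0,1)$ and a point $z_0$ with $|z_0|=r_0$ at which $\mbox{Re}\,p(z_0)=0$ while $\mbox{Re}\,p(z)>0$ for $|z|<r_0$. In particular $p(z_0)=iu_2$ is purely imaginary, $p\neq-1$ on $\overline{\{|z|\le r_0\}}$ (so $w$ is analytic there), and $|w(z)|<1$ inside with $|w(z_0)|=1$. Thus $|w|$ attains its maximum over the closed disk at the boundary point $z_0$, which is exactly the configuration Jack's lemma addresses.

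Next I would apply Jack's lemma to get $z_0w'(z_0)=k\,w(z_0)$ for some real $k\ge 1$. Writing $w(z_0)=e^{i\theta}$ and using $p=(1-w)/(1+w)$, a short computation gives $p(z_0)=-i\tan(\theta/2)$, so $u_2=-\tan(\theta/2)$, and
\begin{equation*}
z_0p'(z_0)=\frac{-2\,z_0w'(z_0)}{\bigl(1+w(z_0)\bigr)^2}=\frac{-k}{2\cos^2(\theta/2)}=-\frac{k}{2}\bigl(1+u_2^2\bigr),
\end{equation*}
which is real and, since $k\ge 1$, satisfies $z_0p'(z_0)=v_1\le-\tfrac12(1+u_2^2)$. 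Hypothesis (iii) then forces $\mbox{Re}\,\Phi(iu_2,v_1)\le 0$, contradicting $\mbox{Re}\,\Phi(p(z_0),z_0p'(z_0))>0$. Hence no such $z_0$ exists and $\mbox{Re}\,p(z)>0$ throughout $\mathbb{U}$.

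The main obstacle is the boundary-value step: one must justify rigorously that, under the failure assumption, $|w|$ genuinely attains the value $1$ first at an interior point $z_0$ (equivalently that $\mbox{Re}\,p$ first vanishes on some circle $|z|=r_0<1$), and that $w$ is analytic up to and across that circle so that Jack's lemma legitimately yields $k\ge 1$. This is where the continuity of $p$ and the fact that $\mbox{Re}\,p\ge 0$ keeps $p$ away from $-1$ on $\overline{\{|z|\le r_0\}}$ do the real work. Once this is secured, the reduction of $(p(z_0),z_0p'(z_0))$ to the canonical pair $(iu_2,v_1)$ with $v_1\le-\tfrac12(1+u_2^2)$ is the routine algebra displayed above, and the contradiction with condition (iii) closes the argument.
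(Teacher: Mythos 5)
The paper states this lemma without proof, quoting it from Miller and Mocanu \cite{milmoc}, so there is no internal argument to compare against; your reconstruction of the hypotheses (i)--(iii) and of the conclusion is the intended one. Your proof is correct and is essentially the classical argument behind the cited result: the first-touch point $z_{0}$ with $\mbox{Re}\,p(z_{0})=0$ and $\mbox{Re}\,p>0$ for $|z|<|z_{0}|$, combined with the M\"{o}bius transfer $w=(1-p)/(1+p)$ and Jack's lemma, yields exactly the Miller--Mocanu boundary estimate $p(z_{0})=iu_{2}$, $z_{0}p^{\prime }(z_{0})=v_{1}\leq -\tfrac{1}{2}(1+u_{2}^{2})$, after which hypothesis (iii) contradicts $\mbox{Re}\,\Phi (p(z_{0}),z_{0}p^{\prime }(z_{0}))>0$ at the interior point $z_{0}$. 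The one step you flag as delicate is indeed secured as you indicate: $\mbox{Re}\,p\geq 0$ on the closed disk $|z|\leq |z_{0}|$ keeps $p$ away from $-1$ there, hence $w$ is analytic on a neighborhood of that disk with $|w|\leq 1=|w(z_{0})|$, which legitimizes $z_{0}w^{\prime }(z_{0})=kw(z_{0})$ with $k\geq 1$.
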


$(i)$ $\Phi (u,v)$\ \textit{is continuous in} $\mathbb{D};$

$(ii)$ $(1,0)\in \mathbb{D}$\ \textit{and} $\mbox{Re}(\Phi (1,0))>0;$

$(iii)$ $\mbox{Re}(\Phi (iu_{2},v_{1}))\leq 0$\ \textit{for all} $%
(iu_{2},v_{1})\in \mathbb{D}$\ \textit{and such that} $v_{1}\leq
-(1+u_{2}^{2})/2.$

\textit{Let} $p(z)=1+p_{1}z+p_{2}z^{2}+\cdots $\ \textit{be analytic in} $%
\mathbb{U}$\ \textit{such that} $(p(z),zp^{\prime }(z))\in \mathbb{D}$\ 
\textit{for all} $z\in \mathbb{U}.$\ \textit{If} $\mbox{Re}(\Phi
(p(z),zp^{\prime }(z)))>0\,\ (z\in \mathbb{U})$, \textit{then} $\mbox{Re}%
(p(z))>0\,$\ $(z\in \mathbb{U}).$

\begin{lemma}
\label{lem4}(\cite{ban})Let $\alpha \geq 1$ and $0\leq \eta <1$. Suppose
also that 
\begin{equation*}
\Psi (\eta )=\frac{(3-\eta )+\sqrt{5\eta ^{2}-18\eta +17}}{2(1-\eta )}.
\end{equation*}
\end{lemma}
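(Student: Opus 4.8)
I read the intended conclusion as the assertion that $\mathbb{E}_{\alpha ,\beta }$ is starlike of order $\eta $ on $\mathbb{U}$ whenever $\beta \geq \Psi (\eta )$, and my plan is to derive this from a coefficient criterion. Writing $\mathbb{E}_{\alpha ,\beta }(z)=z+\sum_{n=2}^{\infty }a_{n}z^{n}$ with $a_{n}=\Gamma (\beta )/\Gamma (\alpha (n-1)+\beta )$, I would start from the elementary observation that
\[
\left\vert \frac{z\mathbb{E}_{\alpha ,\beta }'(z)}{\mathbb{E}_{\alpha ,\beta }(z)}-1\right\vert
\leq \frac{\sum_{n=2}^{\infty }(n-1)|a_{n}|}{1-\sum_{n=2}^{\infty }|a_{n}|}
\]
holds on $\mathbb{U}$ as soon as $\sum_{n=2}^{\infty }|a_{n}|<1$, and that bounding the right-hand side by $1-\eta $ yields $\mbox{Re}\bigl(z\mathbb{E}_{\alpha ,\beta }'(z)/\mathbb{E}_{\alpha ,\beta }(z)\bigr)>\eta $. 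So the whole problem reduces to estimating the two series $\sum |a_{n}|$ and $\sum (n-1)|a_{n}|$ by tractable functions of $\beta $.

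Next I would control the coefficients by exploiting $\alpha \geq 1$. Since $\alpha (n-1)+\beta \geq (n-1)+\beta $ and $\Gamma $ is increasing on the relevant range of $\beta $, we get $a_{n}\leq \Gamma (\beta )/\Gamma ((n-1)+\beta )=1/(\beta )_{n-1}$, so the extremal case is $\alpha =1$. Estimating the rising factorial from below by $(\beta )_{n-1}\geq \beta (\beta +1)^{n-2}$ majorizes each series by a geometric series (respectively its first derivative), which I would sum in closed form to reach bounds of the shape $\sum |a_{n}|\leq (\beta +1)/\beta ^{2}$ and $\sum (n-1)|a_{n}|\leq (2\beta +1)/\beta ^{2}$.

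Feeding these into the displayed inequality and clearing the positive denominator $1-\sum_{n=2}^{\infty }|a_{n}|$ turns the requirement that the right-hand side not exceed $1-\eta $ into the purely algebraic condition
\[
(1-\eta )\beta ^{2}-(3-\eta )\beta -(2-\eta )\geq 0 .
\]
Since $1-\eta >0$, this quadratic in $\beta $ has a single positive root, and solving it reproduces exactly $\Psi (\eta )=[(3-\eta )+\sqrt{5\eta ^{2}-18\eta +17}\,]/[2(1-\eta )]$; the inequality holds precisely for $\beta \geq \Psi (\eta )$, which is the claim.

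The step I expect to fight with is the calibration of the majorant for the $(n-1)$-weighted series. The sharpest rising-factorial bound leads to a cubic condition in $\beta $ with no usable closed-form root, so the estimate has to be relaxed just enough to collapse the condition to the above quadratic while staying valid for all admissible $\beta $; simultaneously I must confirm that $\sum_{n=2}^{\infty }|a_{n}|<1$ on that range, so that the denominator in the criterion is genuinely positive and the clearing of denominators is legitimate.
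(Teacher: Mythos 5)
The paper itself gives no proof of this lemma --- it is quoted from Bansal and Prajapat \cite{ban} --- and your coefficient-criterion argument is correct and is essentially the proof in that source: the sufficient condition $\left\vert zf^{\prime }(z)/f(z)-1\right\vert \leq \sum_{n\geq 2}(n-1)a_{n}/(1-\sum_{n\geq 2}a_{n})$ together with $a_{n}\leq 1/(\beta )_{n-1}\leq 1/(\beta (\beta +1)^{n-2})$ (valid since $\alpha \geq 1$ and $\Gamma $ is increasing on the relevant range) yields exactly your two majorants, once $(\beta +1)^{2}/\beta ^{3}$ is relaxed to $(2\beta +1)/\beta ^{2}$ using $\beta ^{2}-\beta -1\geq 0$. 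Your algebra checks out: the relaxed bounds combine to give precisely the quantity $(2\beta +1)/(\beta ^{2}-\beta -1)$ of Lemma \ref{lem3}, the requirement that it not exceed $1-\eta $ is the quadratic $(1-\eta )\beta ^{2}-(3-\eta )\beta -(2-\eta )\geq 0$ whose unique positive root is $\Psi (\eta )$ (discriminant $(3-\eta )^{2}+4(1-\eta )(2-\eta )=5\eta ^{2}-18\eta +17$), and the positivity of the denominator that you flagged as the delicate point is in fact automatic, since the final inequality reads $(1-\eta )(\beta ^{2}-\beta -1)\geq 2\beta +1>0$ and hence itself forces $\sum_{n\geq 2}a_{n}\leq (\beta +1)/\beta ^{2}<1$.
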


\textit{If} $\beta \geq \Psi (\eta )$, \textit{then} $\mathbb{E}_{\alpha
,\beta }$ \textit{is starlike function of order} $\eta $.

\begin{lemma}
\label{lem3}( \cite{sribv})Let $\alpha \geq 1$ and $\beta \geq 1.$ Then 
\begin{equation}
\left\vert \frac{z\mathbb{E}_{\alpha ,\beta }^{\prime }(z)}{\mathbb{E}%
_{\alpha ,\beta }(z)}-1\right\vert \leq \frac{2\beta +1}{\beta ^{2}-\beta -1}%
,\qquad (z\in \mathbb{U)}.  \label{r4}
\end{equation}
\end{lemma}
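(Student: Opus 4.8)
The plan is to work directly from the power series \eqref{44}. Writing $\mathbb{E}_{\alpha,\beta}(z)=z+\sum_{n=2}^{\infty}b_{n}z^{n}$ with $b_{n}=\Gamma(\beta)/\Gamma(\alpha(n-1)+\beta)$, I would compute $z\mathbb{E}_{\alpha,\beta}^{\prime}(z)-\mathbb{E}_{\alpha,\beta}(z)=\sum_{n=2}^{\infty}(n-1)b_{n}z^{n}$, so that the quantity to be estimated becomes the modulus of a ratio of two power series whose coefficients are all nonnegative. The heart of the argument is therefore a sharp geometric majorant for the coefficients $b_{n}$.

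First I would establish the coefficient estimate $b_{n}\le\beta^{-1}(\beta+1)^{-(n-2)}$ for $n\ge2$. Since $\alpha\ge1$ gives $\alpha(n-1)+\beta\ge(n-1)+\beta\ge2$ and $\Gamma$ is increasing on $[2,\infty)$, one has $\Gamma(\alpha(n-1)+\beta)\ge\Gamma(\beta+n-1)=\Gamma(\beta)\,(\beta)_{n-1}$, where $(\beta)_{n-1}=\beta(\beta+1)\cdots(\beta+n-2)$ is the Pochhammer symbol. Bounding every factor beyond the first below by $\beta+1$ (using $\beta\ge1$) yields $(\beta)_{n-1}\ge\beta(\beta+1)^{n-2}$, and hence the claimed majorant. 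I expect this step---choosing the right majorant and justifying the monotonicity of $\Gamma$ together with the Pochhammer factorization---to be the main obstacle, since a cruder bound such as $b_{n}\le\beta^{-(n-1)}$ produces the wrong denominator and fails to reach the stated constant.

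With the majorant in hand I would pass to moduli: for $z\in\mathbb{U}$,
\begin{equation*}
\left\vert \frac{z\mathbb{E}_{\alpha,\beta}^{\prime}(z)}{\mathbb{E}_{\alpha,\beta}(z)}-1\right\vert \le\frac{\sum_{n=2}^{\infty}(n-1)b_{n}\left\vert z\right\vert ^{n-1}}{1-\sum_{n=2}^{\infty}b_{n}\left\vert z\right\vert ^{n-1}},
\end{equation*}
the denominator being estimated from below by the reverse triangle inequality. Summing the two geometric-type series against the majorant gives $\sum_{n=2}^{\infty}b_{n}\le(\beta+1)/\beta^{2}$ and $\sum_{n=2}^{\infty}(n-1)b_{n}\le(\beta+1)^{2}/\beta^{3}$; here I would observe that $1-(\beta+1)/\beta^{2}=(\beta^{2}-\beta-1)/\beta^{2}$ is positive precisely when $\beta^{2}-\beta-1>0$, which is the regime in which the asserted bound is meaningful. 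As the right-hand fraction is increasing in $\left\vert z\right\vert$, its supremum over $\mathbb{U}$ is attained in the limit $\left\vert z\right\vert \to1^{-}$, yielding the estimate $(\beta+1)^{2}/\bigl(\beta(\beta^{2}-\beta-1)\bigr)$.

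Finally I would reconcile this with the stated constant via the elementary inequality $(\beta+1)^{2}\le\beta(2\beta+1)$, which is equivalent to $\beta^{2}-\beta-1\ge0$ and hence holds throughout the relevant range; this weakens the numerator $(\beta+1)^{2}/\beta^{3}$ to $(2\beta+1)/\beta^{2}$ and produces exactly
\begin{equation*}
\left\vert \frac{z\mathbb{E}_{\alpha,\beta}^{\prime}(z)}{\mathbb{E}_{\alpha,\beta}(z)}-1\right\vert \le\frac{2\beta+1}{\beta^{2}-\beta-1},
\end{equation*}
as required.
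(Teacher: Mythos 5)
Your proof is correct, and a preliminary remark is in order: the paper itself offers no proof of this lemma, which is quoted from \cite{sribv}, so the comparison is with the cited source rather than with anything in the text. Your argument---majorizing the coefficients $b_{n}=\Gamma(\beta)/\Gamma(\alpha(n-1)+\beta)$ of \eqref{44} by $\beta^{-1}(\beta+1)^{-(n-2)}$ via the monotonicity of $\Gamma$ on $[2,\infty)$ (both arguments $\alpha(n-1)+\beta\geq(n-1)+\beta\geq 2$ lie in that range) and the factorization $\Gamma(\beta+n-1)=\Gamma(\beta)(\beta)_{n-1}\geq\Gamma(\beta)\,\beta(\beta+1)^{n-2}$, then summing geometric series against the triangle and reverse triangle inequalities---is precisely the coefficient-majorization technique of the cited source, so in spirit the two routes coincide. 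The computations check out: $\sum_{n\geq2}b_{n}\leq(\beta+1)/\beta^{2}$, $\sum_{n\geq2}(n-1)b_{n}\leq(\beta+1)^{2}/\beta^{3}$, and the lower bound $|\mathbb{E}_{\alpha,\beta}(z)|\geq|z|\bigl(1-\sum_{n\geq2}b_{n}|z|^{n-1}\bigr)$ is positive exactly when $\beta^{2}-\beta-1>0$, which also guarantees $\mathbb{E}_{\alpha,\beta}$ is zero-free on $\mathbb{U}\setminus\{0\}$ so that the quotient is well defined. Your version even lands on the slightly sharper intermediate constant $(\beta+1)^{2}/\bigl(\beta(\beta^{2}-\beta-1)\bigr)$ before deliberately weakening it through $(\beta+1)^{2}\leq\beta(2\beta+1)$, which is indeed equivalent to $\beta^{2}-\beta-1\geq0$, to recover the stated constant $(2\beta+1)/(\beta^{2}-\beta-1)$.

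One remark on scope rather than on your proof: as stated, with only the hypothesis $\beta\geq1$, the inequality \eqref{r4} is false for $1\leq\beta<\frac{1}{2}(1+\sqrt{5})$ (the right-hand side is then negative while the left is a modulus) and undefined at $\beta=\frac{1}{2}(1+\sqrt{5})$. You flagged this honestly: the argument, like the lemma itself, has content only when $\beta^{2}-\beta-1>0$, i.e. $\beta>\frac{1}{2}(1+\sqrt{5})$, and this is exactly the regime in which the paper applies the lemma, since Theorem \ref{th2.1} assumes $\beta_{j}\geq\frac{1}{2}(1+\sqrt{5})$ together with a positivity hypothesis that forces the strict inequality. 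So your proof establishes everything the paper actually uses.
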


\section{Main Results}

Our first result provides the order of starlikeness for integral operator of
the type (\ref{z1}).

\begin{theorem}
\label{th1}Let $\alpha _{j}\geq 1,0\leq \eta _{j}<1$, and 
\begin{equation*}
\beta _{j}\geq \frac{(3-\eta _{j})+\sqrt{5\eta _{j}^{2}-18\eta _{j}+17}}{%
2(1-\eta _{j})},
\end{equation*}%
for all $j=1,2,3,\ldots ,n$. Suppose also that $\lambda _{1},\lambda
_{2},\dots ,\lambda _{n},\zeta $ \textit{are positive real numbers such that}%
\begin{equation*}
\sum_{j=1}^{n}\frac{1-\eta _{j}}{\lambda _{j}}\leq \zeta ,
\end{equation*}%
\textit{then} $\mathbb{F(}z)\mathbb{\in }$ $\mathcal{S}^{\ast }(\delta ),$%
\textit{where}
\end{theorem}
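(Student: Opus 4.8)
The plan is to convert the starlikeness condition into a differential subordination that can be settled by the Miller--Mocanu-type Lemma~\ref{lem1}. First I would record the differential identity governing $\mathbb{F}$. Differentiating the relation obtained from \eqref{z1}, namely $\mathbb{F}(z)^{\zeta-1}\mathbb{F}'(z)=z^{\zeta-1}\prod_{j=1}^{n}\bigl(\mathbb{E}_{\alpha_j,\beta_j}(z)/z\bigr)^{1/\lambda_j}$, logarithmically (apply $z\,d/dz$ to the logarithm) yields
\begin{equation*}
(\zeta-1)\frac{z\mathbb{F}'(z)}{\mathbb{F}(z)}+\frac{z\mathbb{F}''(z)}{\mathbb{F}'(z)}=(\zeta-1)+\sum_{j=1}^{n}\frac{1}{\lambda_j}\left(\frac{z\mathbb{E}_{\alpha_j,\beta_j}'(z)}{\mathbb{E}_{\alpha_j,\beta_j}(z)}-1\right).
\end{equation*}
The hypothesis $\beta_j\ge\Psi(\eta_j)$ is exactly the requirement of Lemma~\ref{lem4}, so each $\mathbb{E}_{\alpha_j,\beta_j}$ is starlike of order $\eta_j$, giving $\mbox{Re}\bigl(z\mathbb{E}_{\alpha_j,\beta_j}'(z)/\mathbb{E}_{\alpha_j,\beta_j}(z)\bigr)>\eta_j$. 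Consequently the real part of the right-hand sum exceeds $-\sum_{j=1}^{n}(1-\eta_j)/\lambda_j$, and this is the only place the functions $\mathbb{E}_{\alpha_j,\beta_j}$ enter.

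Next I would fix the order as $\delta=1-\tfrac{1}{\zeta}\sum_{j=1}^{n}(1-\eta_j)/\lambda_j$, which satisfies $0\le\delta<1$ precisely because each $(1-\eta_j)/\lambda_j>0$ and $\sum_j(1-\eta_j)/\lambda_j\le\zeta$. Define $p$ by $z\mathbb{F}'(z)/\mathbb{F}(z)=(1-\delta)p(z)+\delta$; since $\mathbb{F}\in\mathcal{A}$, $p$ is analytic in $\mathbb{U}$ with $p(0)=1$. Writing $w=z\mathbb{F}'/\mathbb{F}$ and using the elementary identity $1+z\mathbb{F}''/\mathbb{F}'=w+zw'/w$, the displayed relation becomes
\begin{equation*}
\zeta(1-\delta)\bigl(p(z)-1\bigr)+\frac{(1-\delta)zp'(z)}{(1-\delta)p(z)+\delta}=\sum_{j=1}^{n}\frac{1}{\lambda_j}\left(\frac{z\mathbb{E}_{\alpha_j,\beta_j}'(z)}{\mathbb{E}_{\alpha_j,\beta_j}(z)}-1\right),
\end{equation*}
whose right-hand side has real part strictly greater than $-\zeta(1-\delta)$ by the choice of $\delta$.

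Finally I would apply Lemma~\ref{lem1} with
\begin{equation*}
\Phi(u,v)=\zeta(1-\delta)u+\frac{(1-\delta)v}{(1-\delta)u+\delta}
\end{equation*}
on $\mathbb{D}=\{(u,v)\in\mathbb{C}^2:(1-\delta)u+\delta\neq0\}$. By the previous paragraph $\Phi(p(z),zp'(z))$ equals $\zeta(1-\delta)$ plus the right-hand sum, so $\mbox{Re}\,\Phi(p(z),zp'(z))>0$ for every $z\in\mathbb{U}$. Conditions (i) and (ii) are routine: $\Phi$ is continuous on $\mathbb{D}$, $(1,0)\in\mathbb{D}$, and $\Phi(1,0)=\zeta(1-\delta)>0$. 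The crux is condition (iii): for $u=iu_2$ and real $v=v_1$ the imaginary contributions drop out and a direct computation gives
\begin{equation*}
\mbox{Re}\,\Phi(iu_2,v_1)=\frac{(1-\delta)\delta\,v_1}{\delta^2+(1-\delta)^2u_2^2},
\end{equation*}
which is $\le0$ as soon as $v_1\le-(1+u_2^2)/2$. Lemma~\ref{lem1} then gives $\mbox{Re}\,p(z)>0$, that is $\mbox{Re}\bigl(z\mathbb{F}'(z)/\mathbb{F}(z)\bigr)>\delta$, as claimed. The main obstacle is organizing the verification of (iii) and recognizing that the stated value of $\delta$ is exactly the one that cancels the additive constants in $\Phi$ and makes $\mbox{Re}\,\Phi(iu_2,v_1)$ manifestly nonpositive; the boundary case $\delta=0$ (when $\sum_j(1-\eta_j)/\lambda_j=\zeta$) would be treated separately, since then $(1-\delta)u+\delta$ vanishes at $u=0$ and a short limiting argument is required.
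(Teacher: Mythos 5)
Your scheme is the same as the paper's --- the substitution $z\mathbb{F}'(z)/\mathbb{F}(z)=\delta+(1-\delta)p(z)$, logarithmic differentiation of \eqref{z1}, Lemma \ref{lem4} to control each $\mathbb{E}_{\alpha_j,\beta_j}$, and Lemma \ref{lem1} with essentially the same $\Phi$ --- and your computations are internally correct, but the result you prove is strictly weaker than Theorem \ref{th1} as stated. The order asserted in the theorem is not your $\delta_{s}:=1-\frac{1}{\zeta}\sum_{j=1}^{n}(1-\eta_{j})/\lambda_{j}$; it is the value \eqref{pp}, i.e. the positive root of
\begin{equation*}
g(\delta):=2\zeta\delta^{2}+\Bigl(\sum_{j=1}^{n}\frac{2(1-\eta_{j})}{\lambda_{j}}-2\zeta+1\Bigr)\delta-1=0.
\end{equation*}
Writing $S=\sum_{j=1}^{n}(1-\eta_{j})/\lambda_{j}>0$ (note $S>0$ always, since $\eta_{j}<1$), a direct computation gives $g(\delta_{s})=g(1-S/\zeta)=-S/\zeta<0$; since $g$ is convex with $g(0)=-1<0$, the root \eqref{pp} is strictly larger than $\delta_{s}$ in every case. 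Concretely, for $n=1$, $\eta_{1}=0$, $\lambda_{1}=\zeta=1$ (the setting of Corollary \ref{cor44}) the theorem yields starlikeness of order $1/2$, while your choice of $\delta$ yields only order $0$.

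The gap is located exactly at condition $(iii)$: by choosing $\delta$ so that the additive constant $S-\zeta(1-\delta)$ cancels out of $\Phi$, you reduce $(iii)$ to $\mbox{Re}\,\Phi(iu_{2},v_{1})=\delta(1-\delta)v_{1}/\bigl(\delta^{2}+(1-\delta)^{2}u_{2}^{2}\bigr)\le 0$, for which merely $v_{1}<0$ suffices --- so the quantitative hypothesis $v_{1}\le-(1+u_{2}^{2})/2$ of Lemma \ref{lem1} is never actually used, and that is precisely where the extra order of starlikeness is lost. The paper instead keeps the constant inside, taking $\Phi(u,v)=\zeta(1-\delta)u+\frac{(1-\delta)v}{\delta+(1-\delta)u}+S-\zeta(1-\delta)$, inserts $v_{1}\le-(1+u_{2}^{2})/2$, and bounds $\mbox{Re}\,\Phi(iu_{2},v_{1})\le(A+Bu_{2}^{2})/C$ with $A=\delta\,g(\delta)$ and $B=(1-\delta)^{2}\bigl(2S-2\zeta(1-\delta)\bigr)-\delta(1-\delta)$; requiring $A\le0$ (together with $B\le0$, which uses $S\le\zeta$) then permits $\delta$ as large as the root \eqref{pp}. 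Your argument is repaired by rerunning your own computation with the constant retained and the full bound on $v_{1}$ applied: this reproduces the paper's quadratic and hence the stated $\delta$. One small point in your favor: your domain $\mathbb{D}=\{(u,v):(1-\delta)u+\delta\neq0\}$ and your remark on the boundary case $\delta=0$ are more scrupulous than the paper, which declares $\mathbb{D}=\mathbb{C}^{2}$ despite the pole of $\Phi$ at $u=-\delta/(1-\delta)$.
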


\begin{equation}
\delta =\frac{-\left( \dsum\limits_{j=1}^{n}\frac{2(1-\eta _{j})}{\lambda
_{j}}-2\zeta +1\right) +\sqrt{\left( \dsum\limits_{j=1}^{n}\frac{2(1-\eta
_{j})}{\lambda _{j}}-2\zeta +1\right) ^{2}+8\zeta }}{4\zeta },\text{ \ \ }%
0\leq \delta <1.  \label{pp}
\end{equation}

\begin{proof}
Define the function $p(z)$ by

\begin{equation}
\frac{z\mathbb{F}^{\prime }(z)}{\mathbb{F}(z)}:=\delta +(1-\delta )p(z),
\label{b1}
\end{equation}%
where $\delta $ as given in (\ref{pp}).

Then $p(z)=1+b_{1}z+b_{2}z+\cdots $ is analytic in $\mathbb{U}$. It follows
from (\ref{z1}) and (\ref{b1})\ that%
\begin{equation}
\frac{z^{\zeta }\tprod\limits_{j=1}^{n}\left( \frac{\mathbb{E}_{\alpha
_{j},\beta _{j}}(z)}{z}\right) ^{1/\lambda _{i}}}{\mathbb{F}^{\zeta }(z)}%
=\delta +(1-\delta )p(z).  \label{b2}
\end{equation}%
Differentiating (\ref{b2}) logarithmically, we obtain

\begin{equation}
\sum_{j=1}^{n}\frac{1}{\lambda _{j}}\left( \frac{z\mathbb{E}_{\alpha
_{j},\beta _{j}}^{\prime }(z)}{\mathbb{E}_{\alpha _{j},\beta _{j}}(z)}%
\right) =\zeta (1-\delta )p(z)+\frac{(1-\delta )zp^{\prime }(z)}{\delta
+(1-\delta )p(z)}+\sum_{j=1}^{n}\frac{1}{\lambda _{j}}-\zeta (1-\delta ).
\end{equation}%
From Lemma \ref{lem1}, $\mathbb{E}_{\alpha _{j},\beta _{j}}$ is starlike
function of order $\eta _{j}$ for all $j=1,2,3,\ldots ,n$, therefore we have%
\begin{eqnarray}
&&\sum_{j=1}^{n}\frac{1}{\lambda _{j}}\mbox{Re}\left( \frac{z\mathbb{E}%
_{\alpha _{j},\beta _{j}}^{\prime }(z)}{\mathbb{E}_{\alpha _{j},\beta
_{j}}(z)}\right)  \notag \\
&=&\mbox{Re}\left\{ \zeta (1-\delta )p(z)+\frac{(1-\delta )zp^{\prime }(z)}{%
\delta +(1-\delta )p(z)}+\sum_{j=1}^{n}\frac{1-\eta _{j}}{\lambda _{j}}%
-\zeta (1-\delta )\right\} >0.
\end{eqnarray}%
If we define the function $\Phi (u,v)$ by%
\begin{equation}
\Phi (u,v)=\zeta (1-\delta )u+\frac{(1-\delta )v}{\delta +(1-\delta )u}%
+\sum_{j=1}^{n}\frac{1-\eta _{j}}{\lambda _{j}}-\zeta (1-\delta )
\end{equation}%
with $u=u_{1}+iu_{2}$ and $v=v_{1}+iv_{2},$ then

$(i)$ $\Phi (u,v)$ is continuous in $\mathbb{D}=\mathbb{C}^{2};$

$(ii)$ $(1,0)\in \mathbb{D}$ and $\mbox{Re}(\Phi
(1,0))=\dsum\limits_{j=1}^{n}\frac{1-\eta _{j}}{\lambda _{j}}>0;$

$(iii)$ For all $(iu_{2},v_{1})\in \mathbb{D}$ and such that $v_{1}\leq
-(1+u_{2}^{2})/2,$%
\begin{eqnarray}
\mbox{Re}(\Phi (iu_{2},v_{1})) &=&\frac{\delta (1-\delta )v_{1}}{\delta
^{2}+(1-\delta )^{2}u_{2}^{2}}+\sum_{j=1}^{n}\frac{1-\eta _{j}}{\lambda _{j}}%
-\zeta (1-\delta )  \notag \\
&\leq &\frac{A+Bu_{2}^{2}}{C}  \label{jj}
\end{eqnarray}

where%
\begin{equation*}
A=\delta \left( 2\zeta \delta ^{2}+\left( \sum_{j=1}^{n}\frac{2(1-\eta _{j})%
}{\lambda _{j}}-2\zeta +1\right) \delta -1\right) ,
\end{equation*}

\begin{equation*}
B=(1-\delta )^{2}\left( \sum_{j=1}^{n}\frac{2(1-\eta _{j})}{\lambda _{j}}%
-2\zeta (1-\delta )\right) -\delta (1-\delta ),
\end{equation*}

and%
\begin{equation*}
C=2\delta ^{2}+2(1-\delta )^{2}u_{2}^{2}.
\end{equation*}

The right hand side of (\ref{jj}) is negative if $A\leq 0$ and $B\leq 0$.
From $A\leq 0$, we have the value of $\delta $ given by (\ref{pp}) and from $%
B\leq 0$, we have $0\leq \delta <1$. Therefore, the function $\Phi (u,v)$
satisfies the conditions in Lemma \ref{lem1}. Thus we have $\mbox{Re}%
(p(z))>0\,(z\in \mathbb{U}),$ that is $\mathbb{F(}z)\mathbb{\in }$ $\mathcal{%
S}^{\ast }(\delta ).$
\end{proof}

\bigskip Let $n=1,$ $\alpha _{1}=\alpha ,$ $\beta _{1}=\beta ,$ $\lambda
_{1}=\lambda $ and $\eta _{1}=0$ in Theorem \ref{th1}, we have the following
result.

\begin{corollary}
\label{cor12}Let $\alpha \geq 1$ and $\beta \geq \frac{3+\sqrt{17}}{2}.$ T%
\textit{hen} $\mathbb{F}_{\alpha ,\beta ,\lambda ,\zeta }(z)=\left\{ \zeta
\int\limits_{0}^{z}t^{\zeta -1}\left( \frac{\mathbb{E}_{\alpha ,\beta }(t)}{t%
}\right) ^{1/\lambda }dt\right\} ^{1/\zeta }\mathbb{\in }$ $\mathcal{S}%
^{\ast }(\delta )$ \textit{where }$\lambda $ and $\zeta $ \textit{are
positive real numbers such that }$\frac{1}{\lambda }\leq \zeta ,$ and
\end{corollary}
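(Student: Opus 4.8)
The plan is to obtain this corollary directly as the single-function specialization ($n=1$) of Theorem \ref{th1}. First I would set $\alpha_1 = \alpha$, $\beta_1 = \beta$, $\lambda_1 = \lambda$, and $\eta_1 = 0$ in the theorem, so that the operator in \eqref{z1} collapses to exactly $\mathbb{F}_{\alpha,\beta,\lambda,\zeta}$ as written in the statement.

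Next I would check that every hypothesis of Theorem \ref{th1} is satisfied under these choices. For the constraint on $\beta$, I would evaluate the bound $\Psi(\eta)$ of Lemma \ref{lem4} at $\eta = 0$: here $\sqrt{5\eta^2 - 18\eta + 17}$ becomes $\sqrt{17}$ and $(3-\eta)/\bigl(2(1-\eta)\bigr)$ becomes $3/2$, so $\Psi(0) = (3+\sqrt{17})/2$, which is precisely the assumed lower bound. The summation hypothesis $\sum_{j=1}^{n} (1-\eta_j)/\lambda_j \le \zeta$ reduces to its single term $1/\lambda \le \zeta$, which is exactly what is assumed here.

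With all hypotheses verified, Theorem \ref{th1} immediately gives $\mathbb{F}_{\alpha,\beta,\lambda,\zeta}(z) \in \mathcal{S}^{\ast}(\delta)$. The only remaining task is to substitute $n=1$ and $\eta_1 = 0$ into the expression \eqref{pp}, which yields
\begin{equation*}
\delta = \frac{-\left(\dfrac{2}{\lambda} - 2\zeta + 1\right) + \sqrt{\left(\dfrac{2}{\lambda} - 2\zeta + 1\right)^2 + 8\zeta}}{4\zeta}.
\end{equation*}

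Because the corollary is a pure specialization, I do not expect any genuine obstacle: the only steps are the arithmetic evaluation of the radical at $\eta = 0$ and the observation that the stated $\delta$ automatically satisfies $0 \le \delta < 1$, both of which are already guaranteed by the parent theorem.
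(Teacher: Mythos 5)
Your proposal is correct and matches the paper exactly: the paper derives Corollary \ref{cor12} precisely by putting $n=1$, $\alpha_1=\alpha$, $\beta_1=\beta$, $\lambda_1=\lambda$, and $\eta_1=0$ in Theorem \ref{th1}. Your explicit verification that $\Psi(0)=(3+\sqrt{17})/2$ and that the summation condition collapses to $1/\lambda\leq\zeta$ is exactly the (unwritten) arithmetic the paper's specialization relies on.
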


\begin{equation}
\delta =\frac{-\left( \frac{2}{\lambda }-2\zeta +1\right) +\sqrt{\left( 
\frac{2}{\lambda }-2\zeta +1\right) ^{2}+8\zeta }}{4\zeta },\text{ \ \ }%
0\leq \delta <1.
\end{equation}

Putting $\lambda =1$ and $\zeta =1$ in Corollary \ref{cor12}, we immediately
have

\begin{corollary}
\label{cor44}Let $\alpha \geq 1$ and $\beta \geq \frac{3+\sqrt{17}}{2}.$ T%
\textit{hen} $\mathbb{F}_{\alpha ,\beta ,1,1}(z)=\int\limits_{0}^{z}\left( 
\frac{\mathbb{E}_{\alpha ,\beta }(t)}{t}\right) dt\mathbb{\ }$is starlike of
order $1/2$ in $\mathbb{U}$.
\end{corollary}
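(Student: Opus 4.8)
The plan is to obtain this statement as an immediate specialization of Corollary \ref{cor12}, since Corollary \ref{cor44} is precisely the case $\lambda = 1$, $\zeta = 1$ of that result. First I would confirm that the hypotheses of Corollary \ref{cor12} survive the substitution: the requirement $\alpha \geq 1$ and $\beta \geq \frac{3+\sqrt{17}}{2}$ is carried over verbatim, and the admissibility constraint $\frac{1}{\lambda} \leq \zeta$ reduces to $1 \leq 1$, which holds. Hence Corollary \ref{cor12} is applicable with these parameter values.

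Next I would simplify the integral operator. Setting $n=1$, $\lambda = 1$, and $\zeta = 1$ in \eqref{z1}, the outer exponent $1/\zeta$ and the prefactor $\zeta$ both become $1$, the weight $t^{\zeta - 1}$ collapses to $t^{0} = 1$, and the inner exponent $1/\lambda$ becomes $1$, so that
\begin{equation*}
\mathbb{F}_{\alpha, \beta, 1, 1}(z) = \int_{0}^{z} \frac{\mathbb{E}_{\alpha, \beta}(t)}{t}\, dt,
\end{equation*}
which is exactly the operator appearing in the statement.

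Finally I would evaluate the order of starlikeness from the expression for $\delta$ in Corollary \ref{cor12}. With $\frac{2}{\lambda} = 2$ and $2\zeta = 2$, the quantity $\frac{2}{\lambda} - 2\zeta + 1$ collapses to $1$, while $8\zeta = 8$ and $4\zeta = 4$, giving
\begin{equation*}
\delta = \frac{-1 + \sqrt{1 + 8}}{4} = \frac{-1 + 3}{4} = \frac{1}{2}.
\end{equation*}
Since $0 \leq \tfrac{1}{2} < 1$, Corollary \ref{cor12} then yields $\mathbb{F}_{\alpha, \beta, 1, 1} \in \mathcal{S}^{\ast}(1/2)$, as asserted. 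I do not expect any genuine obstacle here: the whole content is the arithmetic collapse of the radical to the clean value $1/2$ together with the observation that the surviving integrand is $\mathbb{E}_{\alpha, \beta}(t)/t$.
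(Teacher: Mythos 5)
Your proposal is correct and matches the paper's route exactly: the paper itself obtains Corollary \ref{cor44} by putting $\lambda =1$ and $\zeta =1$ in Corollary \ref{cor12}, and your arithmetic $\delta =\frac{-1+\sqrt{9}}{4}=\frac{1}{2}$ together with the check that $\frac{1}{\lambda }\leq \zeta $ reduces to $1\leq 1$ is precisely the verification the paper leaves implicit.
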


\begin{example}
$~$Let $\mathbb{E}_{2,4}(z)=6[\sinh \sqrt{z}-\sqrt{z}]/\sqrt{z},$ \textit{%
then}$\mathbb{\ }\int\limits_{0}^{z}\frac{6[\sinh \sqrt{t}-\sqrt{t}]}{t^{3/2}%
}dt$ is starlike of order $1/2$ in $\mathbb{U}$.
\end{example}

Making use Lemma \ref{lem3}, we determine the order of convexity for
integral operator of the type (\ref{z1}).

\begin{theorem}
\label{th2.1} Let $\alpha _{1},\alpha _{2},\dots ,\alpha _{n}\geq 1$, $\beta
_{1},\beta _{2},\dots ,\beta _{n}\geq \frac{1}{2}(1+\sqrt{5})$ and consider
the normalized Mittag-Leffler functions $\mathbb{E}_{\alpha _{j},\beta _{j}}$
defined by 
\begin{equation}
\mathbb{E}_{\alpha _{j},\beta _{j}}(z)=\Gamma (\beta _{j})zE_{\alpha
_{j},\beta _{j}}(z).  \label{kkk}
\end{equation}%
Let $\beta =\min \{\beta _{1},\beta _{2},\dots ,\beta _{n}\}$ and $\lambda
_{1},\lambda _{2},\dots ,\lambda _{n}$ be nonzero positive real numbers.
Moreover, suppose that these numbers satisfy the following inequality 
\begin{equation*}
0\leq 1-\frac{2\beta +1}{\beta ^{2}-\beta -1}\sum\limits_{j=1}^{n}\frac{1}{%
\lambda _{j}}<1.
\end{equation*}%
Then the function $\mathbb{F}_{\alpha _{j},\beta _{j},\lambda _{j}}$ defined
by 
\begin{equation}
\mathbb{F}_{\alpha _{j},\beta _{j},\lambda
_{j}}(z)=\int\limits_{0}^{z}\prod_{j=1}^{n}\left( \frac{\mathbb{E}_{\alpha
_{j},\beta _{j}}(t)}{t}\right) ^{1/\lambda _{j}}dt,  \label{v2}
\end{equation}%
is in $\mathcal{C(}\delta \mathcal{)}$, where 
\begin{equation*}
\delta =1-\frac{2\beta +1}{\beta ^{2}-\beta -1}\sum\limits_{j=1}^{n}\frac{1}{%
\lambda _{j}}.
\end{equation*}
\end{theorem}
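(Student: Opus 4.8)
The plan is to verify the convexity criterion (1.3) directly, by computing the logarithmic derivative of $\mathbb{F}'$ and then estimating the resulting expression via Lemma \ref{lem3}. First I would differentiate the defining relation \eqref{v2}, which gives $\mathbb{F}'(z)=\prod_{j=1}^{n}\left(\mathbb{E}_{\alpha_{j},\beta_{j}}(z)/z\right)^{1/\lambda_{j}}$. Taking a logarithmic derivative, multiplying by $z$, and rearranging produces the identity
\[
1+\frac{z\mathbb{F}''(z)}{\mathbb{F}'(z)}=1+\sum_{j=1}^{n}\frac{1}{\lambda_{j}}\left(\frac{z\mathbb{E}_{\alpha_{j},\beta_{j}}'(z)}{\mathbb{E}_{\alpha_{j},\beta_{j}}(z)}-1\right),
\]
where the $-1/z$ arising from differentiating $\log(1/z)$ contributes the $-1$ in each summand after multiplication by $z$.

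Next I would take real parts and apply the elementary bound $\mbox{Re}(w)\geq-\left\vert w\right\vert$ term by term. Since the hypotheses $\alpha_{j}\geq1$ and $\beta_{j}\geq\tfrac{1}{2}(1+\sqrt{5})\geq1$ place each pair $(\alpha_{j},\beta_{j})$ in the range of Lemma \ref{lem3}, that lemma yields
\[
\mbox{Re}\left(1+\frac{z\mathbb{F}''(z)}{\mathbb{F}'(z)}\right)\geq 1-\sum_{j=1}^{n}\frac{1}{\lambda_{j}}\cdot\frac{2\beta_{j}+1}{\beta_{j}^{2}-\beta_{j}-1}.
\]

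The one genuine point requiring care is replacing each $\beta_{j}$ by the common minimum $\beta=\min\{\beta_{1},\dots,\beta_{n}\}$, so that the lower bound collapses to the stated $\delta$. I would establish that $g(x)=(2x+1)/(x^{2}-x-1)$ is decreasing on the set where $x^{2}-x-1>0$, i.e.\ for $x>\tfrac{1}{2}(1+\sqrt{5})$: a short computation gives $g'(x)=-(2x^{2}+2x+1)/(x^{2}-x-1)^{2}$, whose numerator $2x^{2}+2x+1$ has negative discriminant and is therefore strictly positive, forcing $g'(x)<0$. Because $\beta\geq\tfrac{1}{2}(1+\sqrt{5})$ guarantees $\beta^{2}-\beta-1>0$ and $\beta\leq\beta_{j}$ for every $j$, monotonicity gives $\frac{2\beta_{j}+1}{\beta_{j}^{2}-\beta_{j}-1}\leq\frac{2\beta+1}{\beta^{2}-\beta-1}$, whence
\[
\mbox{Re}\left(1+\frac{z\mathbb{F}''(z)}{\mathbb{F}'(z)}\right)\geq 1-\frac{2\beta+1}{\beta^{2}-\beta-1}\sum_{j=1}^{n}\frac{1}{\lambda_{j}}=\delta .
\]
The assumption $0\leq\delta<1$ then certifies $\mathbb{F}_{\alpha_{j},\beta_{j},\lambda_{j}}\in\mathcal{C}(\delta)$. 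I expect the monotonicity of $g$ to be the only real obstacle; the remainder is a routine logarithmic differentiation followed by the triangle-inequality estimate supplied by Lemma \ref{lem3}.
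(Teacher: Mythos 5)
Your proposal is correct and follows essentially the same route as the paper: logarithmic differentiation of $\mathbb{F}'(z)=\prod_{j=1}^{n}\bigl(\mathbb{E}_{\alpha_{j},\beta_{j}}(z)/z\bigr)^{1/\lambda_{j}}$, the real-part lower bound via Lemma \ref{lem3}, and the monotonicity of $\varphi(x)=(2x+1)/(x^{2}-x-1)$ to pass from each $\beta_{j}$ to $\beta=\min_{j}\beta_{j}$. The only difference is that you verify the monotonicity by computing $\varphi'(x)=-(2x^{2}+2x+1)/(x^{2}-x-1)^{2}<0$ explicitly, a fact the paper merely asserts, so your write-up is if anything slightly more complete.
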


\begin{proof}
We observe that $\mathbb{E}_{\alpha _{j},\beta _{j}}\in \mathcal{A}$, i.e. $%
\mathbb{E}_{\alpha _{j},\beta _{j}}(0)=\mathbb{E}_{\alpha _{j},\beta
_{j}}^{\prime }(0)-1=0$, for all $j\in \{1,2,\dots ,n\}.$ On the other hand,
it is easy to see that 
\begin{equation*}
\mathbb{F}_{\alpha _{j},\beta _{j},\lambda _{j}}^{\prime
}(z)=\prod_{j=1}^{n}\left( \frac{\mathbb{E}_{\alpha _{j},\beta _{j}}(z)}{z}%
\right) ^{1/\lambda _{j}}
\end{equation*}%
and 
\begin{equation*}
\frac{z\mathbb{F}_{\alpha _{j},\beta _{j},\lambda _{j}}^{\prime \prime }(z)}{%
\mathbb{F}_{\alpha _{j},\beta _{j},\lambda _{j}}^{\prime }(z)}=\sum_{j=1}^{n}%
\frac{1}{\lambda _{j}}\left( \frac{z\mathbb{E}_{\alpha _{j},\beta
_{j}}^{\prime }(z)}{\mathbb{E}_{\alpha _{j},\beta _{j}}(z)}-1\right) ,
\end{equation*}

or, equivalently,

\begin{equation}
1+\frac{z\mathbb{F}_{\alpha _{j},\beta _{j},\lambda _{j}}^{\prime \prime }(z)%
}{\mathbb{F}_{\alpha _{j},\beta _{j},\lambda _{j}}^{\prime }(z)}%
=\sum_{j=1}^{n}\frac{1}{\lambda _{j}}\left( \frac{z\mathbb{E}_{\alpha
_{j},\beta _{j}}^{\prime }(z)}{\mathbb{E}_{\alpha _{j},\beta _{j}}(z)}%
\right) +1-\sum_{j=1}^{n}\frac{1}{\lambda _{j}}.  \label{b6}
\end{equation}

Taking the real part of both terms of (\ref{b6}), we have%
\begin{equation}
\mbox{Re}\left\{ 1+\frac{z\mathbb{F}_{\alpha _{j},\beta _{j},\lambda
_{j}}^{\prime \prime }(z)}{\mathbb{F}_{\alpha _{j},\beta _{j},\lambda
_{j}}^{\prime }(z)}\right\} =\sum_{j=1}^{n}\frac{1}{\lambda _{j}}\mbox{Re}%
\left( \frac{z\mathbb{E}_{\alpha _{j},\beta _{j}}^{\prime }(z)}{\mathbb{E}%
_{\alpha _{j},\beta _{j}}(z)}\right) +\left( 1-\sum_{j=1}^{n}\frac{1}{%
\lambda _{j}}\right) .  \label{b7}
\end{equation}

Now, by using the inequality \eqref{r4} for each $\beta _{j},$ where $j\in
\{1,2,\dots ,n\},$ we obtain 
\begin{eqnarray*}
\mbox{Re}\left\{ 1+\frac{z\mathbb{F}_{\alpha _{j},\beta _{j},\lambda
_{j}}^{\prime \prime }(z)}{\mathbb{F}_{\alpha _{j},\beta _{j},\lambda
_{j}}^{\prime }(z)}\right\} &=&\sum_{j=1}^{n}\frac{1}{\lambda _{j}}\mbox{Re}%
\left( \frac{z\mathbb{E}_{\alpha _{j},\beta _{j}}^{\prime }(z)}{\mathbb{E}%
_{\alpha _{j},\beta _{j}}(z)}\right) +\left( 1-\sum_{j=1}^{n}\frac{1}{%
\lambda _{j}}\right) \\
&>&\sum_{j=1}^{n}\frac{1}{\lambda _{j}}\left( 1-\frac{2\beta _{j}+1}{\beta
_{j}^{2}-\beta _{j}-1}\right) +\left( 1-\sum_{j=1}^{n}\frac{1}{\lambda _{j}}%
\right) \\
&=&1-\frac{2\beta +1}{\beta ^{2}-\beta -1}\sum_{j=1}^{n}\frac{1}{\lambda _{j}%
}
\end{eqnarray*}%
for all $z\in \mathbb{D}$ and $\beta _{1},\beta _{2},\dots ,\beta _{n}\geq 
\frac{1}{2}(1+\sqrt{5}).$ Here we used that the function $\varphi :(\frac{1}{%
2}(1+\sqrt{5}),\infty )\rightarrow \mathbb{R},$ defined by 
\begin{equation*}
\varphi (x)=\frac{2x+1}{x^{2}-x-1},
\end{equation*}%
is decreasing. Therefore, for all $j\in \{1,2,\dots ,n\}$ we have 
\begin{equation}
\frac{2\beta _{j}+1}{\beta _{j}^{2}-\beta _{j}-1}\leq \frac{2\beta +1}{\beta
^{2}-\beta -1}.  \label{ww}
\end{equation}

Because $0\leq 1-\frac{2\beta +1}{\beta ^{2}-\beta -1}\sum\limits_{j=1}^{n}%
\frac{1}{\lambda _{j}}<1,$we get $\mathbb{F}_{\alpha _{j},\beta _{j},\lambda
_{j}}(z)\in \mathcal{C(\delta )}$, where $\mathcal{\delta =}1-\frac{2\beta +1%
}{\beta ^{2}-\beta -1}\sum\limits_{j=1}^{n}\frac{1}{\lambda _{j}}.$ This
completes the proof.
\end{proof}

Let $n=1,$ $\alpha _{1}=\alpha ,$ $\beta _{1}=\beta $ and $\lambda
_{1}=\lambda $ in Theorem \ref{th1}, we have the following result.\bigskip

\begin{corollary}
Let $\alpha \geq 1$, $\beta \geq \frac{1}{2}(1+\sqrt{5})$ and $\lambda >0$.
Moreover, suppose that these numbers satisfy the following inequality 
\begin{equation*}
0\leq 1-\frac{2\beta +1}{\lambda (\beta ^{2}-\beta -1)}<1.
\end{equation*}%
Then the function $\mathbb{F}_{\alpha ,\beta ,\lambda }$ defined by 
\begin{equation}
\mathbb{F}_{\alpha ,\beta ,\lambda }(z)=\int\limits_{0}^{z}\left( \frac{%
\mathbb{E}_{\alpha ,\beta }(t)}{t}\right) ^{1/\lambda }dt,
\end{equation}%
is in $\mathcal{C(}\delta \mathcal{)}$, where 
\begin{equation*}
\delta =1-\frac{2\beta +1}{\lambda (\beta ^{2}-\beta -1)}.
\end{equation*}
\end{corollary}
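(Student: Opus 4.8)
The plan is to obtain this corollary as the direct specialization of Theorem \ref{th2.1} to the case $n=1$. First I would set $n=1$ and relabel $\alpha_{1}=\alpha$, $\beta_{1}=\beta$, $\lambda_{1}=\lambda$ throughout the statement of Theorem \ref{th2.1}. Under this substitution the product in \eqref{v2} collapses to the single factor $\left(\frac{\mathbb{E}_{\alpha,\beta}(t)}{t}\right)^{1/\lambda}$, so that the operator $\mathbb{F}_{\alpha_{j},\beta_{j},\lambda_{j}}$ becomes exactly $\mathbb{F}_{\alpha,\beta,\lambda}(z)=\int_{0}^{z}\left(\frac{\mathbb{E}_{\alpha,\beta}(t)}{t}\right)^{1/\lambda}dt$, i.e.\ the operator named in the corollary.

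Next I would check that the quantities appearing in Theorem \ref{th2.1} specialize correctly. Since there is only one index, $\beta=\min\{\beta_{1}\}=\beta$ holds trivially, and the sum $\sum_{j=1}^{n}\frac{1}{\lambda_{j}}$ reduces to the single term $\frac{1}{\lambda}$. Consequently the admissibility hypothesis $0\leq 1-\frac{2\beta+1}{\beta^{2}-\beta-1}\sum_{j=1}^{n}\frac{1}{\lambda_{j}}<1$ becomes precisely $0\leq 1-\frac{2\beta+1}{\lambda(\beta^{2}-\beta-1)}<1$, and the order of convexity $\delta=1-\frac{2\beta+1}{\beta^{2}-\beta-1}\sum_{j=1}^{n}\frac{1}{\lambda_{j}}$ becomes $\delta=1-\frac{2\beta+1}{\lambda(\beta^{2}-\beta-1)}$, matching the claimed value. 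With $\alpha\geq 1$ and $\beta\geq\frac{1}{2}(1+\sqrt{5})$ in force, every requirement of Theorem \ref{th2.1} is met, so the conclusion $\mathbb{F}_{\alpha,\beta,\lambda}\in\mathcal{C}(\delta)$ follows at once.

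Because the argument is a pure specialization, there is no genuine obstacle: the only thing to verify with care is that the single-variable bookkeeping (the degenerate minimum and the one-term sum) is transcribed without error. If one preferred a self-contained derivation, I would instead repeat the short computation from the proof of Theorem \ref{th2.1}. Logarithmic differentiation of $\mathbb{F}'_{\alpha,\beta,\lambda}(z)=\left(\frac{\mathbb{E}_{\alpha,\beta}(z)}{z}\right)^{1/\lambda}$ gives $1+\frac{z\mathbb{F}''_{\alpha,\beta,\lambda}(z)}{\mathbb{F}'_{\alpha,\beta,\lambda}(z)}=\frac{1}{\lambda}\frac{z\mathbb{E}'_{\alpha,\beta}(z)}{\mathbb{E}_{\alpha,\beta}(z)}+1-\frac{1}{\lambda}$, and then applying Lemma \ref{lem3} to bound $\mbox{Re}\left(\frac{z\mathbb{E}'_{\alpha,\beta}(z)}{\mathbb{E}_{\alpha,\beta}(z)}\right)>1-\frac{2\beta+1}{\beta^{2}-\beta-1}$ yields the same lower bound $\delta$ on $\mbox{Re}\left(1+\frac{z\mathbb{F}''_{\alpha,\beta,\lambda}(z)}{\mathbb{F}'_{\alpha,\beta,\lambda}(z)}\right)$, which is exactly the assertion that $\mathbb{F}_{\alpha,\beta,\lambda}\in\mathcal{C}(\delta)$.
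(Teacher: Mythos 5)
Your proposal is correct and is exactly how the paper obtains this corollary: a direct specialization of Theorem \ref{th2.1} to $n=1$ with $\alpha_{1}=\alpha$, $\beta_{1}=\beta$, $\lambda_{1}=\lambda$, under which the minimum and the one-term sum collapse as you describe (note the paper's preamble to the corollary mistakenly cites Theorem \ref{th1} instead of Theorem \ref{th2.1}, a typo your reading silently corrects). Your optional self-contained derivation simply replays the $n=1$ case of the paper's proof of Theorem \ref{th2.1}, so it introduces nothing genuinely different.
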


\begin{example}
\bigskip $(i)~$If $0\leq 1-\frac{5}{\lambda }<1,$ then $\int\limits_{0}^{z}%
\left( \frac{\sinh \sqrt{t}}{\sqrt{t}}\right) ^{1/\lambda }dt\in \mathcal{C(}%
\delta \mathcal{)};\delta =1-\frac{5}{\lambda };\lambda \geq 5.$
\end{example}

$(ii)~$\textit{If \ }$0\leq 1-\frac{7}{5\lambda }<1,$\textit{\ then} $%
\int\limits_{0}^{z}\left( \frac{2[\cosh \sqrt{t}-1]}{t}\right) ^{1/\lambda
}dt\in \mathcal{C(}\delta \mathcal{)};\delta =1-\frac{7}{5\lambda };\lambda
\geq 7/5.$

$(iii)$\textit{\ If }$0\leq 1-\frac{9}{11\lambda }<1,$\textit{\ then} $%
\int\limits_{0}^{z}\left( \frac{6[\sinh \sqrt{t}-\sqrt{t}]}{t^{3/2}}\right)
^{1/\lambda }dt\in \mathcal{C(}\delta \mathcal{)};\delta =1-\frac{9}{%
11\lambda };\lambda \geq 9/11.$


\begin{thebibliography}{99}
\bibitem{ade} A. A. Attiya, Some Applications of Mittag-Leffler Function in
the Unit Disk, Filomat 30:7 (2016), 2075--2081.

\bibitem{ban} D. Bansal, J. K. Prajapat, Certain geometric properties of the
Mittag-Leffler functions, Complex Var. Elliptic Equ., 61(3)(2016), 338-350.

\bibitem{gar} M. Garg, P. Manohar and S.L. Kalla, A Mittag-Leffler-type
function of two variables. Integral Transforms Spec. Funct. 24 (2013), no.
11, 934--944.

\bibitem{kir1} V. Kiryakova, Generalized fractional calculus and
applications. Pitman Research Notes in Mathematics Series, 301. Longman
Scientific \& Technical, Harlow; copublished in the United States with John
Wiley \& Sons, Inc., New York, 1994.

\bibitem{kir2} V. Kiryakova, Multiple (multiindex) Mittag-Leffler functions
and relations to generalized fractional calculus. Higher transcendental
functions and their applications, J. Comput. Appl. Math. 118 (2000), no.
1-2, 241--259.

\bibitem{kir3} V. Kiryakova, The multi-index Mittag-Leffler functions as an
important class of special functions of fractional calculus, Comput. Math.
Appl. 59 (2010), no. 5, 1885--1895.

\bibitem{ker4} F. Mainardia and R. Gorenflo, On Mittag-Leffler-type
functions in fractional evolution processes. Higher transcendental functions
and their applications. J. Comput. Appl. Math. 118 (2000), no. 1-2, 283--299.

\bibitem{milmoc} S. S. Miller and P.T. Mocanu, Second order differential
inequalities in the complex plane, J. Math. Ana.Appl. 65(1978), 289-305.

\bibitem{mit} G. M. Mittag-Leffler, Sur la nouvelle fonction $E(x)$, C. R.
Acad. Sci. Paris, 137(1903), 554-558.

\bibitem{sri} H.M. Srivastava and Z. Tomovski, Fractional calculus with an
integral operator containing a generalized Mittag-Leffler function in the
kernel, Appl. Math. Comp., 211(2009), 198-210.

\bibitem{sribv} H. M. Srivastava, B. A. Frasin and Virgil Pescar, Univalence
of integral operators involving Mittag-Leffler functions, Appl. Math. Inf.
Sci. 11, No. 3, 635-641 (2017).

\bibitem{wi1} A. Wiman, \"{U}ber den Fundamental satz in der Theorie der
Funcktionen $E(x)$, Acta Math., 29(1905), 191-201.

\bibitem{wi2} A. Wiman, \"{U}ber die Nullstellun der Funcktionen $E(x)$,
Acta Math., 29(1905), 217-134.
\end{thebibliography}
\end{document}